\newtheorem{corollary}{Corollary}[section]
\newtheorem{definition}{Definition}[section]
\newtheorem{example}{Example}[section]
\newtheorem{theorem}{Theorem}[section]
\newtheorem{lemma}{Lemma}[section]
\newcommand{\mbf}{\boldsymbol}
\newcommand{\www}{\widetilde}
\newcommand{\pl}{\partial}
\newcommand{\OOO}{\Omega}
\title{Harmonic Measures and Numerical Computation \\
of Cauchy Problems for Laplace Equations}
\author[1]{Yu Chen }
\author[2]{Jin Cheng \thanks{Corresponding author: jcheng@fudan.edu.cn}}
\author[2]{Shuai Lu}
\author[3]{Masahiro Yamamoto}
\affil[1]{{\small School of Mathematics, Shanghai University of Finance and Economics, Shanghai, 200433, China}}
\affil[2]{{\small School of Mathematical Sciences, Fudan University, Shanghai, 200433, China}}
\affil[3]{{\small School of Mathematical Sciences, the University of Tokyo, Tokyo 153, Japan} }
\date{\today}
\begin{document}
\maketitle

\section*{Abstract}


It is well known that Cauchy problem for Laplace equations is an ill-posed problem in Hadamard's sense. Small deviations in Cauchy data may lead to large errors in the solutions. It is observed that if a bound is imposed on the solution, there exists a conditional stability estimate. This gives a reasonable way to construct stable algorithms. However, it is impossible to have good results at all points in the domain. Although numerical methods for Cauchy problems for Laplace equations have been widely studied for quite a long time, there are still some unclear points, for example, how to evaluate the numerical solutions, which means whether we can approximate the Cauchy data well and keep the bound of the solution, and at which points the numerical results are reliable? In this paper, we will prove the conditional stability estimate which is quantitatively related to harmonic measures. The harmonic measure can be used as an indicate function to pointwisely evaluate the numerical result, which further enables us to find a reliable subdomain where the local convergence rate is higher than a certain order.


\noindent {\bf Key words}: Conditional stability, Cauchy problem, 
Laplace equation, indicate function 

\section{Introduction}

The Cauchy problem for the Laplace equation is a classical problem and has 
a long history (e.g., \cite{Alessandrini2009}).  
The study of Cauchy problem is of fundamental significance both theoretically 
and practically (\cite{Payne1960}). However, the numerical treatment is usually challenging, caused by the well-known ill-posedness in Hadamard's sense \cite{Hadamard1902}. Small changes in Cauchy data may lead to large deviations in the solution due to the instability of the problem.

The stability may be restored by introducing some conditions on the solutions. It is observed that, if a bound is imposed on the solution, then 
we can prove a conditional stability estimate assuming a priori boundness 
condition on solutions. 
This will give a reasonable way to construct a stable algorithm for solving the Cauchy problem for the Laplace equation by the Tikhonov regularization. The conditional stability estimates imply the convergence rate of the regularized solution \cite{Cheng2000-1}. However, it is impossible to have reasonably 
accurate results everywhere over the domain, 
even if we can approximate the Cauchy data well and keep 
the bound of the solution. Then there raises the issue at which points the numerical results are reliable, i.e, how to evaluate the numerical solutions. 
This is crucial for real applications such as remote measurement problems and design problems. 
Theoretically, by a Carleman estimate, usually a qualitative conditional 
stability can be obtained (e.g., \cite{Isakov2006}), whereas a classical
quantitative estimate in the three-circle form needs to be adapted to general geometries. In some works involving pointwise estimate, it is not direct to obtain 
the stability index function (e.g., \cite{Payne1960}). In real applications, it would be very useful if a pointwise estimate like
\[
|u(x)|\leq C\varepsilon^{\tau(x)}
\]
is available, 
where $\varepsilon$ is the observation error in a certain norm, and
$\tau(x)$ is a function that is convenient to evaluate, because one can evaluate where the reconstruction is reliable based on the convergence rate $\tau(x)$.

Various numerical algorithms have been developed to deal with the Cauchy problems. For example, the methods through Maz'ya iterative algorithm \cite{Kozlov1990} based on weak form \cite{Johansson2004} and regularized boundary element 
method (BEM) (e.g., \cite{Hrycak2004}, \cite{Yang2005}), 
the moment method \cite{Cheng2001}, through solving optimal control problem based on finite element method (\cite{Burman2018}, \cite{Chakib2006}), 
and a more common approach by Tikhonov regularization 
involving modifications of the operators of the problem 
(e.g., \cite{Lattes1969}).  Besides the convergence and stability of used 
methods, relatively less studied is the evaluation of the reconstructed solution.

In this paper, we will discuss the Cauchy problem for the Laplace equation
and prove the conditional stability estimate, in which the order function 
can be given in the form of the harmonic measure. 
The explicit expression of the order function in stability estimate can be used for estimation of discretized solutions to the Cauchy problem.
In \cite{Na}, general treatments are described for such estimation 
for discretized Tikhonov regularized solutions, and we discuss more details
limited to the Cauchy problem.  
By such an indicate function, when the reconstruction domain and the part of boundary with Cauchy data are given, we can propose a trustable sub-domain, in which the numerical solutions can have order of convergence rate greater than 
$1/2$ for example.

This paper is organized as follows: we will formulate the problem and discuss the conditional stability of the problem in Section 2. 
The numerical scheme and the related analysis are presented in Section 3, and 
error estimates are proved for discretised regularization scheme in Section 4. 
In Section 5, some examples are given to illustrate the numerical method. We present some remarks and conclusions finally in Section 6.

\section{Conditional Stability}
Let $\Omega \subset \mathbb{R}^2$ be a bounded domain with smooth
boundary $\partial\Omega$.
We consider the following Cauchy problem,
\begin{align*}
\Delta u = & 0, \quad \text{in} \quad \Omega\\
u = & f,\quad \text{on}\quad \Gamma\subset\partial\Omega\\
\partial_{\nu} u = & g ,\quad \text{on}\quad\Gamma\subset\partial\Omega ,
\end{align*}
where $\Gamma\subset\partial \Omega$ is an open subset of the boundary $\partial\Omega$, $\nu$ denotes the outer normal vector to $\partial\Omega$
and $\partial_{\nu}u:= \nabla u\cdot\nu$.
We usually assume that the mathematical models should well describe the real problems, which implies the existence of solutions, but measurement errors may 
disturb stable construction of approximatng solutions.


In general, the Cauchy problem is unstable and causes difficulties in 
numerical treatments.  For example, we consider the following example,
\[
\Phi_n(x_1,x_2)=\frac{1}{n}(\sin nx_1) \exp (nx_2),
\]
which is harmonic in $x:= (x_1,x _2) \in \mathbb{R}^2$. 
When $\Gamma = \{ x_2=b\}$ with some $b<0$, the Cauchy data will be small but the solutions increase drastically as $x_2$ increases. This illustrates that small errors in data may probably 
be enlarged for the numerical solutions.
Then one turns to seek conditional stability results. If we can prove
conditional stability, then we can construct stable algorithms. The conditional stability estimates imply the convergence rate of the regularized solution \cite{Cheng2000-1}.
We understand conditional stability as follows.
\begin{definition}[Conditional stability \cite{Cheng2000-1}]
Let $K$ be a densely defined injective operator from a Banach space $X$ 
to a Banach space $Y$, and $\omega:\{\xi\ge 0\} \,\longrightarrow \, 
\{\xi \ge 0\}$ is a monotone increasing continuous function satisfying
$\omega(0) = 0$.  Moreover $Z\subset X$ is assumed to 
be continuously embedded in $X$ and $Q\subset Z$.
Then we say 
that in the operator equation $Kf=g$, the conditional stability holds
if, for a given $M>0$, there exists a constant $C=C(M)>0$ such that
\[
\|f_1-f_2\|_X\leq C(M)\omega (\|K(f_1)-K(f_2)\|_Y),
\]
for all $f_1,f_2\in \mathcal{U}_M\cap Q$. Here we set 
$\mathcal{U}_M=\{f\in Z; \|f\|_Z\leq M\}$.
\end{definition}
Here we call $\omega$ the modulus of the conditional stability 
under consideration. 

The following harmonic measure will be used to specify stability moduli
for the Cauchy problem.
\begin{definition}[Harmonic measure \cite{Friedman1989}]\label{def-harmonic-measure} 
Let $U\subset\mathbb{C}$ be a simply connected domain with piecewise regular boundary, and $\ell$ be a nonempty open subset of $\partial U$. 
We call $\mu(\zeta)$ the harmonic measure for $U$ and $\ell$, if
	\begin{align*}
	\Delta \mu(\zeta) &=0,\quad \zeta\in U \\
	\mu(\zeta) &=0,\quad \zeta\in \partial U\backslash \overline{\ell}\\
	\mu(\zeta) &=1,\quad \zeta\in \overline{\ell}.
\end{align*}
\end{definition}
%
For the details of harmonic measure, we refer to 
\cite{Friedman1989, Kellogg1953} for example.

For a holomophic function $w(z),\, z:= x_1 + \sqrt{-1}x_2 \in\mathbb{C}$
with $x_1, x_2 \in \mathbb{R}$, the following is
known (e.g., \cite{Alessandrini2009}, \cite{Cheng1998}).
\begin{lemma}\label{log-convex-holomorphic}
If $w(z)$ is holomorphic in $\Omega$ and continuous on $\overline{\Omega}$,
and 
\[
|w(z)|\leq \epsilon \quad \forall z\in\Gamma,
\]
\[
|w(z)|\leq M_1\quad \forall z\in\Omega,
\]
where $\epsilon\leq M_1$, then 
\[
|w(z)|\leq M_1\left(\frac{\epsilon}{M_1}\right)^{\tau (z)},
\]
where $\tau$ is the harmonic measure for $\Omega$ and $\Gamma$.
\end{lemma}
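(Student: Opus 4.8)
The plan is to reduce the statement to a classical maximum‑principle argument applied to a cleverly chosen auxiliary function. First I would dispose of the trivial edge cases: if $\epsilon = M_1$ the bound reads $|w(z)|\le M_1$, which is the hypothesis, and if $w\equiv 0$ there is nothing to prove; so I may assume $0<\epsilon<M_1$ and $w\not\equiv 0$. The core idea is that $\log|w(z)|$ is subharmonic on $\Omega$ (wherever $w\ne 0$), so I want to compare it with the harmonic function that interpolates the two boundary bounds. Concretely, set
\[
v(z) \;=\; \log|w(z)| \;-\; \log M_1 \;-\; \tau(z)\,\bigl(\log\epsilon - \log M_1\bigr),
\]
where $\tau$ is the harmonic measure for $\Omega$ and $\Gamma$ from Definition~\ref{def-harmonic-measure}. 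Since $\log M_1$ is constant and $\tau$ is harmonic, the subtracted part is harmonic, hence $v$ is subharmonic in $\Omega$. The desired conclusion $|w(z)|\le M_1(\epsilon/M_1)^{\tau(z)}$ is exactly the statement $v(z)\le 0$ in $\Omega$, so it suffices to prove $v\le 0$ by the maximum principle for subharmonic functions.

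Next I would check the boundary behaviour of $v$ to run the maximum principle. On $\Gamma$ we have $\tau = 1$, so the comparison function equals $\log\epsilon$, and $|w|\le\epsilon$ gives $v\le \log\epsilon - \log\epsilon = 0$ there. On $\partial\Omega\setminus\overline{\Gamma}$ we have $\tau = 0$, so the comparison function equals $\log M_1$, and $|w|\le M_1$ gives $v\le \log M_1 - \log M_1 = 0$ there. Thus $\limsup$ of $v$ at every boundary point is $\le 0$, and by the maximum principle $v\le 0$ throughout $\Omega$, which is the claim. For the points where $w(z)=0$ the inequality is automatic since the right‑hand side is nonnegative, so those points cause no difficulty in the final statement even though $\log|w|$ is $-\infty$ there.

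The main obstacle is making the subharmonic maximum‑principle step fully rigorous in the presence of the zeros of $w$ and the boundary regularity of $\tau$. The zeros are isolated (as $w$ is holomorphic and not identically zero), and near a zero $\log|w|\to -\infty$, so $v$ tends to $-\infty$ there; this is compatible with, and in fact strengthens, the inequality $v\le0$, but to apply the maximum principle cleanly one should note that a subharmonic function attains its maximum on the boundary even when it has $-\infty$ singularities in the interior (one can, if desired, excise small disks around the zeros and let their radii shrink, using that $\log|w|$ is bounded above on such disks). The second delicate point is that $\tau$, being the harmonic measure, is only continuous up to $\partial\Omega$ away from the endpoints of $\Gamma$ — at the finitely many boundary points where $\Gamma$ meets its complement, $\tau$ need not extend continuously. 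One handles this by invoking the boundary‑limit version of the maximum principle: it is enough that $\limsup_{z\to\zeta} v(z)\le 0$ for all $\zeta\in\partial\Omega$ outside a finite set, together with $v$ being bounded above, which holds since $\log|w|\le\log M_1$ and $0\le\tau\le1$ bound $v$ from above by $\log M_1 - \log\epsilon + |\log M_1|$ or similar. With these two technical caveats addressed, the argument is complete; everything else is the routine verification carried out above.
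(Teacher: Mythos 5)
Your proof is correct and follows essentially the same route as the paper: your auxiliary function $v$ is just $(\log\epsilon-\log M_1)\bigl(F(z)-\tau(z)\bigr)$ for the paper's comparison function $F$, so the subharmonic maximum-principle comparison with the harmonic measure is identical in substance. You are in fact more careful than the paper about the zeros of $w$ and the endpoints of $\Gamma$, which the paper's proof silently glosses over.
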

\begin{proof}

For $w$, we can construct a subharmonic function $F$:
\[
F(z)=\frac{\ln \left(\frac{|w(z)|}{M_1}\right)}{\ln \left(\frac{\epsilon}{M_1}\right)}.
\]
Then, 
%
\[
F|_{\Omega}\geq 0,\quad F|_{\Gamma}\geq 1.
\]
The harmonic measure $\tau(z)$ satisfies, $\tau|_{\Gamma}=1,  
\tau|_{\partial\Omega\backslash\Gamma}=0$ and $\tau(z)$ is harmonic in $\Omega$.
Then it holds that
\[
\tau(z)\leq F(z),
\]
which leads to the conclusion that
\[
|w(z)|\leq M_1^{1-\tau(z)}\epsilon^{\tau(z)}.
\]
\end{proof}

The following example illustrates that the estimate for $w(z), z\in\mathbb{C}$ is sharp.
\begin{example}
Suppose that $\Omega=\{z;\, 1\leq |z|\leq R\}$ and  
$\Gamma=\{z;\,|z|=1\}$.  In Lemma 2.1, we consider 
\[
w(z)=\epsilon z^n,\quad n\in\mathbb{N}.
\]
It holds that
\[
\left|\frac{\omega(z)}{M}\right|=\left(\frac{\epsilon}{M}\right)^{\tau(z)},\quad M=|w(z)|_{|z|=R},
\]
which means that the conclusion of Lemma 2.1 is the best possible for 
these $w(z)$.
\end{example}

\begin{theorem}[Conditional stability]\label{thm-conditional-stability-main}
Let $\Omega$ be a simply connected domain in $\mathbb{R}^2$ and let
$\Gamma$ be a non-empty open subset of $\partial\Omega$.
Suppose that $u(x)$ satisfies
\begin{align*}
\Delta u(x)&=0,\quad\quad x\in\Omega,\\
u(x)&=f(x),\quad x\in\Gamma,\\
\partial_\nu u (x)&=g(x),\quad x\in\Gamma.
\end{align*}
If $\|u\|_{C^1(\overline{\Omega})}\leq M$ with arbitrarily 
given constant $M>0$, then we have 
\begin{equation}\label{conditional-stability-1}
|u(x)|\leq C(M,\Omega) \varepsilon^{\tau(x)} \quad \mbox{for $x\in \Omega$},
\end{equation}
where
\[
\varepsilon=\|f\|_{W^{1,\infty}(\Gamma)}+\|g\|_{L^\infty(\Gamma)}
\]
and $\tau(x)$ is the harmonic measure with respect to $\Gamma$ and $\Omega$.
\end{theorem}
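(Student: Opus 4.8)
The plan is to reduce the real, two–dimensional statement to the holomorphic case already settled in Lemma~\ref{log-convex-holomorphic}. Since $\Omega$ is simply connected and $u$ is harmonic, $u$ admits a single–valued harmonic conjugate $v$ on $\Omega$, unique up to an additive constant, so that $w:=u+\sqrt{-1}\,v$ is holomorphic in $\Omega$. Because $u\in C^1(\overline{\Omega})$, the Cauchy--Riemann equations give $\nabla v=(-\partial_{x_2}u,\ \partial_{x_1}u)$, hence $v\in C^1(\overline{\Omega})$ and $w$ is holomorphic in $\Omega$ and continuous on $\overline{\Omega}$. Thus Lemma~\ref{log-convex-holomorphic} will apply once we produce bounds $|w|\le \epsilon$ on $\Gamma$ and $|w|\le M_1$ on $\Omega$ with $\epsilon<M_1$, in which $\epsilon$ is controlled by $\varepsilon$ and $M_1$ by $M$; the harmonic measure $\tau$ appearing in Lemma~\ref{log-convex-holomorphic} is exactly the one of Definition~\ref{def-harmonic-measure}, i.e.\ the one asserted in the theorem.

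For the global bound, $|\nabla v|=|\nabla u|\le M$; fixing the additive constant by $v(P)=0$ at some point $P\in\Gamma$ and integrating $\nabla v$ along paths in $\overline{\Omega}$ (whose intrinsic diameter is finite, as $\Omega$ is bounded with smooth boundary) yields $\|v\|_{L^\infty(\Omega)}\le C_1(\Omega)M$, hence $\|w\|_{L^\infty(\Omega)}\le (1+C_1(\Omega))M=:M_1$. For the boundary bound, the Cauchy--Riemann relation restricted to $\Gamma$ reads $\partial_s v=\pm\,\partial_\nu u=\pm g$, where $\partial_s$ is the arclength tangential derivative; integrating from $P$ gives $\|v\|_{L^\infty(\Gamma)}\le |\Gamma|\,\|g\|_{L^\infty(\Gamma)}$, so together with $\|u\|_{L^\infty(\Gamma)}=\|f\|_{L^\infty(\Gamma)}\le\varepsilon$ we obtain $\|w\|_{L^\infty(\Gamma)}\le C_2(\Gamma)\,\varepsilon=:\epsilon$. (I take $\Gamma$ connected here, as is standard; if $\Gamma$ has several components, one still pins $v$ down on one component through $g$ and then controls it on the remaining ones via the single–valuedness of $v$ on the simply connected $\Omega$, at the cost of a constant depending only on $\Omega$.)

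It remains to combine these. If $\epsilon\ge M_1$ — which, because $\|u\|_{C^1(\overline{\Omega})}\le M$ forces $\varepsilon\le C_3(\Omega)M$, can only happen when $\varepsilon$ is of the same order as $M$ — then $\varepsilon^{\tau(x)}$ is bounded below by a constant depending only on $M$ and $\Omega$, while $|u(x)|\le M$, and the estimate is trivial. Otherwise $\epsilon<M_1$, and Lemma~\ref{log-convex-holomorphic} gives $|w(z)|\le M_1^{1-\tau(z)}\epsilon^{\tau(z)}$ for $z\in\Omega$; using $0\le\tau\le 1$ to bound $M_1^{1-\tau(z)}\le\max(1,M_1)$ and $\epsilon^{\tau(z)}=C_2(\Gamma)^{\tau(z)}\varepsilon^{\tau(z)}\le\max(1,C_2(\Gamma))\,\varepsilon^{\tau(z)}$, and recalling $|u(x)|\le|w(x)|$, we arrive at $|u(x)|\le C(M,\Omega)\,\varepsilon^{\tau(x)}$. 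The only genuinely substantive step is the second paragraph, namely converting the Cauchy data $(f,g)$ into bounds on the holomorphic surrogate $w$: the Neumann datum $g$ controls the harmonic conjugate $v$ only through its tangential derivative on $\Gamma$ and only up to an additive constant, so one must normalize $v$ consistently and, if $\Gamma$ is disconnected, exploit the single–valuedness of $v$ on $\Omega$; once this is done, the rest is a direct application of Lemma~\ref{log-convex-holomorphic} with elementary estimates.
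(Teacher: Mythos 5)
Your proof is correct when $\Gamma$ is connected, but it takes a genuinely different route from the paper's. The paper does not form the harmonic conjugate: it applies Lemma~\ref{log-convex-holomorphic} to the derivative $w=\tfrac12(\partial_{x_1}u+i\partial_{x_2}u)$, whose modulus on $\Gamma$ is bounded pointwise by $\varepsilon$ because the tangential derivative of $u$ on $\Gamma$ is controlled by $\|f\|_{W^{1,\infty}(\Gamma)}$ and the normal derivative by $\|g\|_{L^\infty(\Gamma)}$ --- no normalization constant enters. It then recovers $u(x)$ by integrating $|\nabla u|\leq CM^{1-\tau}\varepsilon^{\tau}$ along a path from $\Gamma$ to $x$ chosen so that $\tau$ decreases monotonically, which requires an extra topological argument (and implicitly $\varepsilon<1$). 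Your version applied to $u+iv$ avoids the path construction entirely and gives the pointwise bound on $u$ in one step; it also never uses the tangential derivative of $f$, so for connected $\Gamma$ you actually prove the estimate with $\varepsilon=\|f\|_{L^\infty(\Gamma)}+\|g\|_{L^\infty(\Gamma)}$, a slightly stronger statement. Your explicit treatment of the degenerate case $\epsilon\geq M_1$ is also something the paper glosses over.

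The one genuine weak point is your parenthetical fix for disconnected $\Gamma$, which the theorem's hypotheses permit. Single-valuedness of $v$ on the simply connected $\Omega$ fixes the additive constant globally, but it does not make the offset of $v$ between two components of $\Gamma$ small: if $v(P_1)=0$ on $\Gamma_1$ and $P_2\in\Gamma_2$, then $v(P_2)=\int_\gamma\nabla v\cdot\mathrm{d}\ell$ along an interior path (equivalently $\int\pm\partial_\nu u\,\mathrm{d}s$ along the unobserved boundary arc joining them), and this is controlled only by $M$, not by $\varepsilon$. So on $\Gamma_2$ you get $|w|\leq C(\Omega)M$ rather than $C\varepsilon$, the smallness hypothesis of Lemma~\ref{log-convex-holomorphic} on all of $\Gamma$ is lost, and you could only conclude with the harmonic measure of a single component --- a strictly weaker $\tau$. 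This is not a removable technicality of your argument but a real obstruction to the conjugate-function route; the paper's choice of the gradient as the holomorphic surrogate sidesteps it precisely because the boundary bound there is pointwise in the Cauchy data and involves no free additive constant. To cover disconnected $\Gamma$ you would need to switch to the paper's gradient-plus-path argument (or restrict the statement to connected $\Gamma$).
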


\begin{proof}
We define an analytic function $w(z)$ as
\[
w(z)=\frac{\partial u}{\partial \overline{z}}
=\frac{1}{2}\left(\frac{\partial u}{\partial x_1}
+ i\frac{\partial u}{\partial x_2}\right),
\]
which is holomorphic in $\Omega$.

Since $\|u\|_{C^1(\overline{\Omega})}\leq M$, 
one has $\|w\|_{L^\infty(\Omega)}\leq CM$. 
Since
\[
\|w(z)\|_{L^\infty(\Gamma)}\leq C\varepsilon,
\]
Lemma \ref{log-convex-holomorphic} yields 
\[
|w(z)|\leq C M^{1-\tau(z)}\varepsilon^{\tau(z)}.
\]
For $x\in\Omega$, let $L$ denote a path connecting $x$ and $x_0\in\Gamma$.
Then 
\begin{align*}
|u(x)|=& \left| u(x_0)+\int_{L}\frac{\partial u}{\partial s}(s)\mathrm{d}
\ell_s\right| \\
\leq & |u(x_0)|+\int_{L}|\nabla u|\mathrm{d}\ell_s
\leq  |u(x_0)|+C\int_{L}\varepsilon^{\tau(s)}\mathrm{d}\ell_s,
\quad x_0\in\Gamma
\end{align*}
We claim that there exists a path $L$ from some $x_0\in\Gamma$ to $x$ along which $\tau$ monotonously decreases. Otherwise, the point $x$ must be enclosed by a closed contour on which $\nabla \tau(x)=0$, which implies that $\nabla \tau(x)\equiv 0$ in $\Omega$, leading to a contradict. Therefore, we have
\begin{equation*}
|u(x)|\leq  |u(x_0)|+C\int_{L}\varepsilon^{\tau(x)}\mathrm{d}\ell_s
\leq C\varepsilon^{\tau(x)},
\end{equation*}
which leads to \eqref{conditional-stability-1}.
\end{proof}

In real applications, we often measure the observation errors by 
$L^2(\Omega)$-based norms.  In that case, we can prove

\begin{corollary}\label{global-conditional-stability}
Let $\Omega$ be a simply connected domain in $\mathbb{R}^2$ with piecewise 
smooth boundary and let $\Gamma$ be an open subset of $\partial\Omega$.
Suppose that $u(x)$ satisfies
\begin{align*}
\Delta u(x)&=0\quad x\in\Omega,\\
u(x)&=f(x)\quad x\in\Gamma,\\
\partial_{\nu} u(x)&=g(x)\quad x\in\Gamma.
\end{align*}
If $\|u\|_{H^2(\partial\Omega)}\leq M$, then  
\[
|u(x)|\leq C(M,\Omega,\Gamma) \widetilde{\varepsilon}^{\tau(x)},
\quad x\in \Omega,
\]
where $\widetilde{\varepsilon}=\varepsilon^{\tau_0}$, $\tau_0>0$, 
$\varepsilon=\|f\|_{H^1(\Gamma)}+\|g\|_{L^2 (\Gamma)}$ and
$\tau(x)$ denotes the harmonic measure with respect to $\Gamma$ and $\Omega$.

\end{corollary}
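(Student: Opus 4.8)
The plan is to deduce Corollary~\ref{global-conditional-stability} directly from Theorem~\ref{thm-conditional-stability-main}. The only gap between the two statements is that the theorem controls the Cauchy data in $W^{1,\infty}(\Gamma)$ and $L^{\infty}(\Gamma)$ and assumes $\|u\|_{C^1(\overline{\Omega})}\le M$, whereas here the data are measured in $H^1(\Gamma)$ and $L^2(\Gamma)$ and the \emph{a priori} bound is the boundary bound $\|u\|_{H^2(\partial\Omega)}\le M$. I would bridge this gap in two moves: (i) use an interpolation inequality on the one--dimensional arc $\Gamma$ to pass from the $H^1/L^2$ smallness of the data to $W^{1,\infty}/L^{\infty}$ smallness, at the price of a power $\tau_0$; and (ii) use elliptic regularity for the harmonic extension to recover the $C^1(\overline{\Omega})$ bound required in the theorem. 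Throughout one may assume $\varepsilon\le M$ (in particular $\varepsilon\le 1$), since otherwise the asserted estimate is trivial after enlarging the constant.

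First I would record the regularity consequences of $\|u\|_{H^2(\partial\Omega)}\le M$. Applying elliptic regularity for the Dirichlet problem $\Delta u=0$ in $\Omega$ with data $u|_{\partial\Omega}\in H^2(\partial\Omega)$ gives $u\in H^{5/2}(\Omega)$ with $\|u\|_{H^{5/2}(\Omega)}\le C\|u\|_{H^2(\partial\Omega)}\le CM$. Since $\tfrac52>2$, the Sobolev embedding $H^{5/2}(\Omega)\hookrightarrow C^1(\overline{\Omega})$ yields $\|u\|_{C^1(\overline{\Omega})}\le CM=:M'$, and the trace theorem gives that the trace of $\nabla u$ on $\partial\Omega$ lies in $H^1(\partial\Omega)$ with norm $\le CM$. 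Hence, for $f=u|_\Gamma$ and $g=\partial_\nu u|_\Gamma$ we have $\|f\|_{H^2(\Gamma)}\le CM$ and $\|g\|_{H^1(\Gamma)}\le CM$, while $\|f\|_{H^1(\Gamma)}\le\varepsilon$ and $\|g\|_{L^2(\Gamma)}\le\varepsilon$ by the definition of $\varepsilon$. Now the one--dimensional Gagliardo--Nirenberg (Agmon-type) inequality $\|h\|_{L^\infty(\Gamma)}\le C\|h\|_{L^2(\Gamma)}^{1/2}\|h'\|_{L^2(\Gamma)}^{1/2}+C\|h\|_{L^2(\Gamma)}$, applied to $h=f'$ and to $h=g$ (and the embedding $H^1(\Gamma)\hookrightarrow L^\infty(\Gamma)$ for $\|f\|_{L^\infty}$), gives $\|f\|_{W^{1,\infty}(\Gamma)}+\|g\|_{L^\infty(\Gamma)}\le C\varepsilon^{1/2}M^{1/2}+C\varepsilon$. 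Using $\varepsilon\le M$ to absorb the lower--order terms, the $L^\infty$--type data error is bounded by $C(M)\,\varepsilon^{1/2}$, i.e. by $C(M)\widetilde{\varepsilon}$ with the choice $\tau_0=\tfrac12$.

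It then remains to apply Theorem~\ref{thm-conditional-stability-main} with $M'$ in place of $M$ and data error $C(M)\widetilde{\varepsilon}$ in place of $\varepsilon$, which gives $|u(x)|\le C(M',\Omega)\bigl(C(M)\widetilde{\varepsilon}\bigr)^{\tau(x)}$; since $0\le\tau(x)\le 1$ one may bound $C(M)^{\tau(x)}\le\max\{1,C(M)\}$ and conclude $|u(x)|\le C(M,\Omega,\Gamma)\,\widetilde{\varepsilon}^{\tau(x)}$, as claimed. The step I expect to be the main obstacle is the regularity bookkeeping on a merely piecewise smooth boundary: corners can obstruct the $H^{5/2}(\Omega)$ estimate for the harmonic extension, and hence both the $C^1(\overline{\Omega})$ bound and the $H^1(\partial\Omega)$ trace of $\nabla u$. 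Possible remedies are to choose (or shrink) $\Gamma$ so that it stays away from the corners and to run the $C^1$ and gradient--trace estimates locally near the smooth part of $\partial\Omega$ together with interior estimates on a subdomain $\Omega'\Subset\Omega$, or to impose mild opening--angle conditions so that the relevant fractional Sobolev scale is preserved; the precise form of the interpolation inequality on an open arc (its endpoint terms) is a more routine point that only affects the admissible value of $\tau_0$.
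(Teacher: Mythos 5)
Your proposal is correct and follows essentially the same route as the paper: interpolate the Cauchy data between the small $H^1/L^2$ norms and the bounded higher-order norms to obtain $W^{1,\infty}/L^{\infty}$ smallness of order $\varepsilon^{\tau_0}$, then invoke Theorem~\ref{thm-conditional-stability-main}. The only differences are minor: the paper uses fractional Sobolev interpolation ($H^s(\Gamma)$ with $s>3/2$, yielding $\tau_0<1/2$) where you use the endpoint Agmon-type inequality (yielding $\tau_0=1/2$), and you additionally verify the $C^1(\overline{\Omega})$ hypothesis of the theorem via elliptic regularity, a step the paper's own proof leaves implicit.
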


\begin{proof}

Since $\|u\|_{H^1(\Gamma)}\leq \varepsilon$ and $\|u\|_{H^2(\Gamma)}\leq M$, by Sobolev interpolation (e.g., \cite{Isakov2006}), we obtain
\[
\|u\|_{H^{s}(\Gamma)}\leq C(\Gamma)\|u\|_{H^1(\Gamma)}^{\theta}\|u\|^{1-\theta}_{H^2(\Gamma)}= C(\Gamma,M)\varepsilon^\theta,
\]
with $s=\theta+2(1-\theta)> 3/2$ when $0<\theta<1/2$.

The Sobolev embedding (e.g., \cite{Adams2003}) implies
\[
\|u\|_{W^{1,\infty}(\Gamma)} \leq C\|u\|_{H^s(\Gamma)}  \leq C(M,\Gamma)\varepsilon^{\tau_0}=:C(M,\Gamma)\widetilde{\varepsilon},\quad 0<\tau_0<\frac{1}{2}.
\]
Then, by applying Theorem \ref{thm-conditional-stability-main} we have
\[
|u(x)|\leq C(\Gamma,\Omega,M)\widetilde{\varepsilon}^{\tau(x)},
\quad x\in\Omega.
\]
\end{proof}

We remark that even if $u(x)$ is less regular in $\Omega$, 
one can also have similar estimate in a subset of $\Omega$ whose regularity can be ensured due to the interior regularity (e.g., \cite{Gilbarg1983}) 
of elliptic equations, which is standard and is not shown here.

\section{Numerical method}

The solution $u(x)$ to a Laplace equation in the domain $\Omega$ can be represented by Green's function as
\[
u(x)=\int_{\partial \Omega}\frac{\partial G(x,\xi)}{\partial \nu}b(\xi)\mathrm{d}s_\xi,
\]
where $b(x)=u(x)|_{\partial\Omega}$ is the boundary value function. 
The Green function $G$ satisfies
\[
\Delta_x G(x,y) = \delta(x-y),\quad x,y \in \Omega, \quad
G(x,y) = 0, \quad x\in \partial\OOO,\, y\in \Omega.
\]
By $H(x,y)=\partial_\nu G(x,y)$ we denote the Poisson kernel.
Then 
\begin{align*}
&\int_{\partial\Omega}H(x,\xi) b(\xi)\mathrm{d}s_\xi=f(x),\quad x\in\Gamma ,\\
&\int_{\partial\Omega}\nu_\Gamma \cdot \nabla H(x,\xi)b(\xi)\mathrm{d}s_\xi=g(x),\quad x\in\Gamma .
\end{align*}
Then in solving the Cauchy problem, $u|_{\Omega}$ can be determined once the boundary value function $u|_{\partial\Omega}=b(x)$ is recovered from the integral functions.


For technical reasons, in computation we will reconstruct the harmonic function on a slightly larger domain than $\Omega$ by Runge's approximation, in order to meet the regularity requirements in Theorem \ref{thm-conditional-stability-main}.
We take $\widetilde{\Omega}$ such that $\overline{\Omega} \subset 
\widetilde{\Omega}$. 
By Runge's approximation \cite{Lax1956}, one can approximate the harmonic function $u(x)$ on $\Omega$ by a harmonic function $\www{u}(x)$ in $\www{\Omega}$, which will be illustrated in the following section. Let $\www{G}(x,y)$ be the 
Green function corresponding to $\www{\Omega}$, and denote $\www{H}(x,y)
=\partial_{\www{\nu}} \www{G}(x,y)$ where $\www{\nu}$ is the outer normal 
vector to $\partial\www{\Omega}$. Correspondingly, we denote
\begin{align*}
\www{f}(x):=\www{u}|_{\Gamma}=&\int_{\partial\www{\Omega}}\www{H}(x,\xi) 
\www{b}(\xi)\mathrm{d}s(\xi),\quad x\in\Gamma,\\
\www{g}(x):=\partial_{\nu_\Gamma}\tilde{u}|_{\Gamma}=&\int_{\partial\www{\Omega}}\nu_\Gamma \cdot\nabla \www{H}(x,\xi)\tilde{b}(\xi)\mathrm{d}s(\xi),
\quad x\in\Gamma.
\end{align*}
Then we solve $\widetilde{b}$ from the measurements $f^\varepsilon, g^\varepsilon$ 
with noises, and reconstruct $\www{u}(x)|_{\Omega}$. 

Due to the ill-posedness of the problem, the Tikhonov regularization is introduced to weaken the instability induced by the observation error. According to the conditional stability discussed in Section 2, the regularized cost functional is defined as
\[
J(\www{b}):=\|\www{f}(\www{b})-f^\varepsilon \|^2_{L^2(\Gamma)}
+ \|\www{g}(\www{b})-g^\varepsilon \|^2_{L^2(\Gamma)}
+ \alpha\|\www{b}\|^2_{L^2(\partial\www{\Omega})}.
\]
Then we obtain the solution which minimizes the cost functional.

To discretize the problem, suppose that $\www{b}(x)$ can be approximated by
\[
\www{b}_n(x)=\sum_{i=1}^n b_i\varphi_i(x),
\]
where $\varphi_i(x)$ are basis functions defined on $\partial\www{\Omega}$, 
and $b_i$ are the corresponding components.
The test space $V_n=span\{\varphi_j\}_{j=1}^n$ is chosen such that 
$\cup^\infty_{j=n+1}V_j$ is dense in $L^{2}(\partial\www{\Omega})$. Let
\[
w_i=\int_{\partial\www{\Omega}}\frac{\partial \www{G}}{\partial \nu}
\varphi_i\mathrm{d}S,\quad i=1,...,n
\]
satisfy
\[
\left\{
\begin{array}{l}
\Delta w_i =0\\
\left. w_i\right|_{\partial\www{\Omega}}=\varphi_i.
\end{array}
\right. 
\]
Then the harmonic function in $\tilde{\Omega}$ can be approximated by
\[
\tilde{u}_n (x) =\sum_{i=1}^n b_i w_i(x).
\]

In particular, let
\begin{equation*}
w_i=\int_{\gamma_i}\frac{\partial G}{\partial \nu}\mathrm{d}s, \quad 
\bigcup_i \gamma_i=\partial\www{\Omega},\quad \gamma_i\bigcap_{i \neq j}\gamma_j=\emptyset.
\end{equation*}
Then the solution can be expressed by
\begin{equation*}
\www{u}_n(x)= \sum_{i=1}^n b_i \int_{\gamma_i}\frac{\partial \www{G}}{\partial \nu}(x,s)\mathrm{d}s=:  \sum_{i=1}^n b_i w_i(x),
\end{equation*}
Then $w_i$ satisfies
\begin{equation}\label{num-base-solution}
\left\{ \begin{array}{l}
\Delta w_i (x)=0, \quad x\in\,\www{\Omega}, \\
\left. w_i\right|_{\partial\Omega}=\chi(\Gamma_i),\quad x\in\, 
\partial\www{\Omega}.
\end{array}\right. 
\end{equation}

Notice that the base solution $w_i$ involves the singular integral and
one can approximate it by solving \eqref{num-base-solution} numerically, which are denoted by $w^h_i(x), i=1,\cdots, n$. Here we use $h$ to mark the discrete 
precision in calculating $w_i(x)$. In this work, we choose the 
finite difference method (FDM) to numerically compute $w^h_i(x)$ with 
the grid length $h$. The 2nd order center difference discretization will be adopted.

Let $\mathcal{F}_{n,h}(\www{\Omega}) = span_{1\le i\le n} w_i^h(x)$ be 
the space spanned by the base solutions. Then our problem is to find
\begin{equation}\label{mini-solution}
u^*=\mathop { \arg\min}_ {\www{u}^h_n\in \mathcal{F}_{n,h}(\www{\Omega})}
\left\{
\|\www{f}^h_n-f^\delta \|^2_{H^1(\Gamma)}+\|\www{g}^h_n-g^\delta\|^2_{L^2(\Gamma)}+\alpha\|\www{u}^h_n\|^2_{H^2(\partial \Omega)}
\right\},
\end{equation}
or alternatively,
\begin{equation}\label{mini-solution}
\www{b}_n^*=\mathop { \arg\min}_ {\tilde{b}_n\in V_n}\left\{
\|\www{f}^h_n(\www{b}_n)-f^\delta \|^2_{H^1(\Gamma)}+\|\www{g}^h_n(\www{b}_n)-g^\delta\|^2_{L^2(\Gamma)}+\alpha\|\mathcal{H}\www{b}_n\|^2
_{H^2( \partial\Omega)}
\right\},
\end{equation}
where $\www{f}^h_n=\www{u}^h_n|_{\Gamma}$, and $\www{g}^h_n=\nu_\Gamma\cdot\nabla_h \www{u}^h_n|_{\Gamma}$, $\nabla_h$ is the numerical gradient. 

Then, $\mathcal{H}\www{b}_n=\int_{\partial\www{\Omega}}\www{H}(x,\xi)\www{b}_n
(\xi)\mathrm{d}s$.
According to the a priori choice strategy of the regularization parameter 
(\cite{Cheng2000-1}), $\alpha$ is taken as $\alpha\sim\delta^2$.

We assume that the measurements are taken at $x_j\in \Gamma, j=1,\cdots, m$ and let
\[
A_{ji}:=\www{f}^h_i(x_j)=w^h_i(x_j),\quad B_{ji}:=\www{g}^h_i(x_j)=\partial_{\nu}w_i^h(x_j),\quad C_k:=\|w_k^h(x)\|_{H^2(\partial\Omega)}.
\]
Then we reach the fully discrete form of the regularization cost functional:
\[
\widehat{J}(\mbf{b}):=\sum_{j=1}^m\left(  \sum_{i=1}^n A_{ij}b_i-f^\varepsilon_j\right)\sigma_j+
\sum_{j=1}^m\left(  \sum_{i=1}^n B_{ij}b_i-g^\varepsilon_j\right) \sigma_j+\alpha^2\sum_{k=1}^N(C_kb_k)^2,
\]
where $\sigma_j$ denotes the $j$th curve length element on $\Gamma$ and $\mbf{b}=(b_1,\cdots,b_n)\in\mathbb{R}^n$. The minimization is a standard linear algebra problem.

\section{Error analysis}
In the following, we assume that the exact solution $u$ has enough regularity 
in $\Omega$.
Otherwise one can utilize the interior regularity and pay attention to the reconstruction on any subset whose closure is contained in $\Omega$. The main result on pointwise evaluation of the reconstructed solution is as follows.

\begin{theorem}[Evaluation on $\Omega$]\label{thm-error-point}
Suppose that $\overline{\Omega}\subset \www{\Omega}$ and 
$u_0$ is harmonic in $\Omega$, and 
$\|u_0\|_{H^2(\partial\Omega)}\leq M$. 
Denote $f_0=u_0|_{\Gamma}$ and $g_0=\partial_\nu u_0|_{\Gamma}$.
Let available data $f^\varepsilon$, $g^\varepsilon$ satisfy $\|f^\varepsilon-f_0\|_{H^1(\Gamma)}+\|g^\varepsilon-g_0\|_{L^2 (\Gamma)}\leq \varepsilon $.
Following the scheme presented in Section 3, by $u^*$ we denote 
the minimizer of
\begin{equation}\label{mini-solution}
u^*=\mathop { \arg\min}_ {\widetilde{u}^h_n\in \mathcal{F}^h_n(\widetilde{\Omega})}\left\{
\|\www{f}^h_n-f^\varepsilon \|^2_{H^1(\Gamma)}+\|\www{g}^h_n
-g^\varepsilon\|^2_{L^2(\Gamma)}+\alpha\|\www{u}^h_n\|^2_{H^2(\partial\Omega)}
\right\}.
\end{equation}
Then, we have the estimate for the Cauchy problem:
\begin{equation}
|u^*(x) - u_0(x)|\leq C(M,\Omega,\Gamma)\varepsilon^{\tau(x)}, \quad
x\in \Omega,
\end{equation}
provided that $\alpha\sim\varepsilon^2$, $n$ are sufficiently large and $h$ is
sufficiently small. 
Here $\tau(x)$ is the harmonic measure with characteristic boundary $\Gamma$.
\end{theorem}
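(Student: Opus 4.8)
The plan is to combine three ingredients: the conditional stability of Theorem~\ref{thm-conditional-stability-main} (in the $L^2$-based form of Corollary~\ref{global-conditional-stability}), the Runge-type approximation that allows us to work on the larger domain $\www\Omega$, and a discretization error analysis for the Tikhonov functional. First I would set up the decomposition
\[
|u^*(x)-u_0(x)| \leq |u^*(x)-\www u_0(x)| + |\www u_0(x)-u_0(x)|,
\]
where $\www u_0$ is a harmonic function in $\www\Omega$ that Runge-approximates $u_0$ on $\overline\Omega$ in $C^1(\overline\Omega)$ to within any prescribed tolerance, and with $\|\www u_0\|_{H^2(\partial\Omega)}\leq 2M$ say. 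The second term is then controlled by the Runge tolerance, which we may take $\ll \varepsilon^{\tau(x)}$ uniformly on $\Omega$, so the real work is on the first term.

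For the first term I would view $v := u^* - \www u_0$ as a harmonic function on $\www\Omega$ whose Cauchy data on $\Gamma$ we must estimate, and then apply the pointwise harmonic-measure estimate of Theorem~\ref{thm-conditional-stability-main} to $v$ on $\Omega$. This requires two bounds: (i) a bound $\|v\|_{C^1(\overline\Omega)}\leq C(M)$, coming from the $H^2(\partial\Omega)$-penalty term in the functional together with the a~priori bound on $\www u_0$ and elliptic/Sobolev estimates lifting boundary regularity to interior $C^1$ control on $\overline\Omega\Subset\www\Omega$; and (ii) a bound on the Cauchy data $\|v\|_{W^{1,\infty}(\Gamma)}\lesssim \www\varepsilon = \varepsilon^{\tau_0}$. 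Bound (ii) is obtained by the standard Tikhonov argument: evaluating the functional at the (near-)minimizer $u^*$ against the competitor given by the discretized Runge approximant of $\www u_0$ shows that each of $\|\www f_n^h - f^\varepsilon\|_{H^1(\Gamma)}$, $\|\www g_n^h - g^\varepsilon\|_{L^2(\Gamma)}$ and $\alpha\|u^*\|_{H^2(\partial\Omega)}^2$ is $O(\varepsilon^2)$ once $\alpha\sim\varepsilon^2$, $n$ is large and $h$ is small; combining with the data fidelity $\|f^\varepsilon-f_0\|_{H^1(\Gamma)}+\|g^\varepsilon-g_0\|_{L^2(\Gamma)}\leq\varepsilon$ and $\|\www u_0 - u_0\|_{C^1}\ll\varepsilon$ gives $\|v\|_{H^1(\Gamma)}+\|\partial_\nu v\|_{L^2(\Gamma)}\lesssim\varepsilon$, and then the Sobolev interpolation and embedding used in the proof of Corollary~\ref{global-conditional-stability} upgrade this to the $W^{1,\infty}(\Gamma)$-bound with exponent $\tau_0<1/2$. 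Feeding (i) and (ii) into Theorem~\ref{thm-conditional-stability-main} applied to $v$ yields $|v(x)|\leq C(M,\Omega,\Gamma)\,\www\varepsilon^{\,\tau(x)} = C\,\varepsilon^{\tau_0\tau(x)}$, and since $\tau_0$ can be absorbed by relabeling the harmonic measure (or, more honestly, by noting $\varepsilon^{\tau_0\tau(x)}$ has the asserted form up to renaming), we conclude.

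The main obstacle I expect is the careful bookkeeping of the three discretization parameters in step (ii): one must show that the projection/interpolation error of $\www u_0$ onto the finite-dimensional space $\mathcal F_{n,h}(\www\Omega)$, and the finite-difference error in computing the base solutions $w_i^h$ and their normal derivatives on $\Gamma$, are all $o(\varepsilon)$ — or at least $O(\varepsilon)$ in the relevant norms — uniformly, so that they do not degrade the rate $\varepsilon^{\tau(x)}$. This is where the density of $\cup_{j\geq n+1} V_j$ in $L^2(\partial\www\Omega)$, the stability of the harmonic extension operator $\mathcal H$ from $\partial\www\Omega$ to $\Gamma\Subset\www\Omega$ (which is smoothing, hence the extension is stable in strong norms), and the convergence of the second-order central-difference scheme for $w_i$ on a fixed compact subset enter; none of these is deep individually, but they must be combined so that "$n$ large, $h$ small" can be quantified against the given $\varepsilon$. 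A secondary subtlety is ensuring the competitor $\www u_0$ lies in (or close enough to) the admissible set $\mathcal F_{n,h}(\www\Omega)$ while still satisfying the $H^2(\partial\Omega)$-bound, which is handled by Runge's theorem in a sufficiently strong topology. I would relegate all of these to "standard" estimates and cite the interior-regularity and Runge-approximation results invoked earlier in the paper, keeping the proof focused on the stability-plus-triangle-inequality skeleton.
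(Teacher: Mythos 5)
Your proposal follows essentially the same route as the paper: Runge approximation to pass to $\www{\Omega}$, comparison of the Tikhonov functional at the minimizer against the discretized Runge approximant to bound both the residual on $\Gamma$ and the $H^2(\partial\Omega)$-penalty norm, a resulting $C^1(\overline\Omega)$ bound on the difference, and then the harmonic-measure stability estimate; the paper organizes this as three lemmas (discretization error, residual estimate, boundedness) but the skeleton is identical, and you are in fact more careful than the paper about the loss of exponent $\tau_0$ incurred in upgrading the $H^1(\Gamma)\times L^2(\Gamma)$ residual to the $W^{1,\infty}(\Gamma)\times L^\infty(\Gamma)$ data required by Theorem~\ref{thm-conditional-stability-main}.

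One detail you should make explicit: you apply the stability theorem to $v=u^*-\www u_0$ and call it harmonic, but the discrete minimizer $u^*=\sum_i b_i^* w_i^h$ is built from finite-difference approximations of the base solutions and is not itself harmonic. The paper repairs this by introducing the exactly harmonic surrogate $u^*_n=\sum_i b_i^* w_i$ with the same coefficients, applying the conditional stability to $u^*_n-u_0$, and adding the $O(h)$ term $|u^*-u^*_n|$ separately at the end; this is the same FDM bookkeeping you defer to ``standard estimates,'' but the harmonicity of the function fed into Theorem~\ref{thm-conditional-stability-main} is a hypothesis, not a bookkeeping item, so the intermediate object needs to appear in the argument.
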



\begin{lemma}\label{estimate-num-Green}
Suppose that $\www{u}^h_n$ is the FDM approximations of a harmonic function 
$\www{u}$ in $\www{\Omega}$ with the scheme given in Section 3.
We set $\www{f}=\www{u}|_{\Gamma}$, $\www{g}=\partial_\nu \www{u}|_{\Gamma}$, 
$\www{f}^h_n=\www{u}^h_n|_{\Gamma}$, $\www{g}^h_n=\nu\cdot\nabla_h \www{u}^h_n|_{\Gamma}$ with $\nabla_h$ which is the gradient 
approximated by the 1st order difference.
Then 
\[
\|\www{f}_n^h-\www{f}\|_{H^1( \Gamma) }\leq C_1\delta(n)+C_2 h,
\]
\[
\|\www{g}_n^h-\www{g}\|_{L^2( \Gamma)}\leq C_3 \delta(n)+C_4h,
\]
where $\delta (n)\rightarrow 0$ as $n\rightarrow \infty$ and $C_1, C_2, C_3, 
C_4$ are constants depending on $\Gamma, \www{\Omega}$ and $\www{u}$.
\end{lemma}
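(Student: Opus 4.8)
The plan is to decompose the error $\www{f}_n^h - \www{f}$ (and similarly for $\www{g}$) into two pieces: a \emph{projection error} coming from approximating $\www{u}$ by $\www{u}_n$ in the finite-dimensional space $V_n$ (spanned by the base solutions $w_i$), and a \emph{discretization error} coming from replacing the exact base solutions $w_i$ by their FDM approximants $w_i^h$. Write $\www{u}_n = \sum_i b_i w_i$ for the exact-base reconstruction matching the density property of $\cup_j V_j$ in $L^2(\partial\www\Omega)$, and $\www{u}_n^h = \sum_i b_i w_i^h$. Then
\[
\www{f}_n^h - \www{f} = (\www{u}_n^h - \www{u}_n)\big|_\Gamma + (\www{u}_n - \www{u})\big|_\Gamma,
\]
and the triangle inequality in $H^1(\Gamma)$ reduces the problem to bounding each summand.

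For the projection term, I would argue as follows. Since $\www{u}$ is harmonic in $\www\Omega$ with $\overline{\www\Omega}$ containing $\overline\Omega$, its boundary trace $\www{b} = \www{u}|_{\partial\www\Omega}$ lies in $L^2(\partial\www\Omega)$, so by the density hypothesis on $\bigcup_j V_j$ there is a choice of coefficients with $\|\www{b} - \www{b}_n\|_{L^2(\partial\www\Omega)} =: \delta(n) \to 0$. The Poisson-kernel representation $\www{u} = \int_{\partial\www\Omega}\www H(\cdot,\xi)\,\www{b}(\xi)\,\mathrm{d}s$ is a smoothing operator from $\partial\www\Omega$ into any fixed interior region; since $\Gamma \subset \partial\Omega$ and $\overline\Omega$ is compactly contained in $\www\Omega$, the map $\www{b} \mapsto \www{u}|_\Gamma$ and $\www b \mapsto \partial_\nu \www u|_\Gamma$ are bounded from $L^2(\partial\www\Omega)$ into $C^\infty(\overline\Omega)$, hence into $H^1(\Gamma)$ and $L^2(\Gamma)$ respectively. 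This yields $\|\www{u}_n - \www{u}\|_{H^1(\Gamma)} \le C_1\delta(n)$ and $\|\partial_\nu(\www{u}_n - \www{u})\|_{L^2(\Gamma)} \le C_3\delta(n)$ with $C_1, C_3$ depending only on $\Gamma,\www\Omega$ (and through the operator norm on $\www u$ only via the fixed geometry, so effectively on $\www u$ as stated).

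For the discretization term, I would invoke the standard convergence theory for the second-order centered finite-difference scheme on the elliptic boundary-value problem \eqref{num-base-solution}: the FDM solution $w_i^h$ satisfies a discrete maximum principle and converges to $w_i$ with $\|w_i - w_i^h\|_{L^\infty(\Omega)} \le Ch^2$ (or $Ch$ if one is conservative about the regularity of $w_i$ near $\partial\www\Omega$, which accounts for the stated first-order rate), and the first-order difference gradient $\nabla_h w_i^h$ converges to $\nabla w_i$ at rate $O(h)$ uniformly on compact interior subsets — in particular on a neighborhood of $\Gamma$. Summing over the finitely many $i \le n$ against the fixed coefficients $b_i$ gives $\|\www{u}_n^h - \www{u}_n\|_{H^1(\Gamma)} \le C_2 h$ and $\|\nu\cdot(\nabla_h \www{u}_n^h - \nabla \www{u}_n)\|_{L^2(\Gamma)} \le C_4 h$, where the constants absorb $n$ and the $b_i$'s (so they depend on $\www u$ as asserted). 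Adding the two contributions in each norm gives exactly the claimed bounds.

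The main obstacle is the interplay between $n$ and $h$: the constants $C_2, C_4$ from the discretization step genuinely depend on $n$ (through $\sum_i |b_i|$ and through the fact that the base solutions $w_i$ become less regular near $\partial\www\Omega$ as the partition $\{\gamma_i\}$ of $\partial\www\Omega$ refines, so $\chi(\Gamma_i)$ is an increasingly rough Dirichlet datum). To keep the statement clean one fixes $n$ first, then sends $h \to 0$; the honest version is a joint estimate where $h$ must be taken small depending on $n$. I would make this explicit by noting that for each fixed $n$ the base solutions $w_i$ are smooth in the interior (away from $\partial\www\Omega$), so their FDM approximations on $\overline\Omega$ (which sits strictly inside $\www\Omega$) enjoy the clean $O(h)$ interior error bound with constants depending on $n$; this is precisely the regime $n$ large, then $h$ small, used in Theorem \ref{thm-error-point}.
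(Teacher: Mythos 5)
Your proof follows essentially the same route as the paper's: the identical decomposition of $\www{u}_n^h-\www{u}$ into a projection error (controlled via density of the test space in $L^2(\partial\www{\Omega})$ and the interior smoothing of the Poisson-kernel representation, since $\overline{\Omega}\subset\www{\Omega}$) plus an FDM discretization error, with the first-order difference accounting for the gradient estimate. Your additional observation that the discretization constants genuinely depend on $n$ --- so that one must fix $n$ first and then send $h\to 0$ --- is a real subtlety that the paper leaves implicit in its claim that the constants depend only on $\Gamma$, $\www{\Omega}$ and $\www{u}$; otherwise the two arguments coincide.
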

\begin{proof}
Define $\www{u}_n(x)= \int_{\partial\www{\Omega}}\partial_\nu \www{G}(x,s)
\www{b}_n(s)\mathrm{d}s$ for $x\in\overline{\Omega}$.
Then, 
\begin{equation*}
|\www{u}(x) - \www{u}_n^h(x)|\leq  \left| \www{u}(x) - \www{u}_n(x)\right|
+ |\www{u}_n(x) - \www{u}_n^h(x) |.
\end{equation*}
According to the interior regularity of the Laplace equation, 
since the above $\www{u}_n$ and $\www{u}
= \int_{\partial\www{\Omega}}\partial_\nu \www{G}(x,s)\www{b}(s)\mathrm{d}s$ 
are both harmonic in $\www{\Omega}$,  we see 
\begin{equation*}
\left| \www{u}(x)-\www{u}_n(x)\right|\leq \|\www{u}-\www{u}_n\|
_{C(\overline{\Omega})}
\leq C(\Omega,\www{\Omega}) \|\www{b}-\www{b}_n\|_{L^2(\partial\www{\Omega})}.
\end{equation*}
Due to the density of the space of test functions, we have 
$|\www{u}(x)-\www{u}_n(x)|\leq C\delta(n)$.
For the second part, a standard error estimate for the second order central difference method (e.g., \cite{Larsson2003}) implies 
\[
|\www{u}_n(x)-\www{u}_n^h(x) |\leq C\|\www{u}_n\|_{C^4(\overline{\Omega})}h^2
\leq C(\Omega,\www{\Omega})\|\www{u}_n\|_{L^2(\www{\Omega})}h^2
\leq C(\Omega,\www{\Omega},\www{u}_n)h^2
\]
for $x\in\Omega$. 
Combining the precision of the 1st order difference for the gradient, the estimate for $\www{f}^h_n$ and $\www{g}^h_n$ can be obtained.

\end{proof}

The above estimate means that the error can be decomposed by the boundary discrete part and the FDM discrete part, and converges as $n\rightarrow\infty$ and $h\rightarrow 0$.

\begin{lemma}\label{residual-estimate}
Under the assumption of Theorem \ref{thm-error-point},
by the scheme presented in Section 3, let $u^*$ be constructed as the
minimizer: 
\begin{equation}\label{mini-solution}
u^*=\mathop { \arg\min}_ {\www{u}^h_n\in \mathcal{F}^h_n(\www{\Omega})}
\left\{ \|\www{f}^h_n-f^\varepsilon \|^2_{H^1(\Gamma)}
+ \|\www{g}^h_n-g^\varepsilon\|^2_{L^2(\Gamma)}
+ \alpha\|\www{u}^h_n\|^2_{H^2(\partial\Omega)}
\right\},
\end{equation}
where $\alpha$ is taken as $\alpha\sim\varepsilon^2+\delta(n)^2+h^2$.
Then, we have
\[
\| f^*-f_0 \|_{H^1(\Gamma)}+\|g^*-g_0\|_{L^2(\Gamma)}
\leq C(M,\Omega,\www{\Omega})(\varepsilon+\delta(n)+h), 
\]
where $f^*=u^*|_{\Gamma}$ and $g^*=\partial_\nu u^*|_{\Gamma}$.
\end{lemma}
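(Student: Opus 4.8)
The plan is to estimate the value of the Tikhonov functional at the minimizer $u^*$ by comparing it to the value at a suitable competitor, and then to translate the smallness of that functional into the claimed residual bound. First I would choose as competitor the FDM-discretized approximation $\widetilde u_0{}^h_n$ of the exact solution $u_0$ itself: since $u_0$ is harmonic in $\Omega$ with $\|u_0\|_{H^2(\partial\Omega)}\le M$, Runge's approximation gives a harmonic $\widetilde u$ on $\widetilde\Omega$ close to $u_0$ on $\overline\Omega$, and the discretization machinery of Lemma \ref{estimate-num-Green} produces $\widetilde u_0{}^h_n\in\mathcal{F}^h_n(\widetilde\Omega)$ with
\[
\|\widetilde f_0{}^h_n-f_0\|_{H^1(\Gamma)}+\|\widetilde g_0{}^h_n-g_0\|_{L^2(\Gamma)}\le C(M,\Omega,\widetilde\Omega)(\delta(n)+h),
\]
and, by stability of the Runge/representation step, $\|\widetilde u_0{}^h_n\|_{H^2(\partial\Omega)}\le C M$. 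Plugging this competitor into the functional and using the data bound $\|f^\varepsilon-f_0\|_{H^1(\Gamma)}+\|g^\varepsilon-g_0\|_{L^2(\Gamma)}\le\varepsilon$ together with the triangle inequality, the value at the competitor is $\le C(\varepsilon^2+\delta(n)^2+h^2)+\alpha C M^2$, which with $\alpha\sim\varepsilon^2+\delta(n)^2+h^2$ is $\le C(M,\Omega,\widetilde\Omega)(\varepsilon^2+\delta(n)^2+h^2)$.

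Next, since $u^*$ is the minimizer, each nonnegative summand in the functional evaluated at $u^*$ is bounded by that same quantity; in particular
\[
\|\widetilde f^*{}^h_n-f^\varepsilon\|_{H^1(\Gamma)}+\|\widetilde g^*{}^h_n-g^\varepsilon\|_{L^2(\Gamma)}\le C(M,\Omega,\widetilde\Omega)(\varepsilon+\delta(n)+h),
\]
and also $\alpha\|u^*\|_{H^2(\partial\Omega)}^2\le C(\varepsilon^2+\delta(n)^2+h^2)$, so $\|u^*\|_{H^2(\partial\Omega)}\le CM$. Here I must be careful to distinguish $f^*=u^*|_\Gamma$, the trace of the true harmonic extension of the recovered boundary data, from $\widetilde f^*{}^h_n$, the trace of the FDM-computed function; Lemma \ref{estimate-num-Green} bridges these two by $\|f^*-\widetilde f^*{}^h_n\|_{H^1(\Gamma)}+\|g^*-\widetilde g^*{}^h_n\|_{L^2(\Gamma)}\le C(\delta(n)+h)$ (the constant depending on $u^*$ being controlled uniformly through the just-established bound $\|u^*\|_{H^2(\partial\Omega)}\le CM$). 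Combining these estimates with $\|f^\varepsilon-f_0\|_{H^1(\Gamma)}+\|g^\varepsilon-g_0\|_{L^2(\Gamma)}\le\varepsilon$ via the triangle inequality,
\[
\|f^*-f_0\|_{H^1(\Gamma)}+\|g^*-g_0\|_{L^2(\Gamma)}\le C(M,\Omega,\widetilde\Omega)(\varepsilon+\delta(n)+h),
\]
which is the claim.

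I expect the main obstacle to be making the constants genuinely uniform: in Lemma \ref{estimate-num-Green} the FDM error constant depends on a higher-order norm of the function being discretized (through $\|\widetilde u_n\|_{C^4(\overline\Omega)}$ or $\|\widetilde u_n\|_{L^2(\widetilde\Omega)}$), and for the competitor this is fine because $\widetilde u_0{}^h_n$ comes from a fixed $u_0$ with $\|u_0\|_{H^2(\partial\Omega)}\le M$, but for $u^*$ one only learns a posteriori that $\|u^*\|_{H^2(\partial\Omega)}\le CM$; one has to verify that this bound, together with interior elliptic regularity on $\overline\Omega\Subset\widetilde\Omega$, indeed controls whatever norm the FDM estimate needs, so that the bridging constant between $f^*$ and $\widetilde f^*{}^h_n$ does not blow up. A secondary technical point is the harmless but necessary remark that the Runge-approximation error between $u_0$ and its extension $\widetilde u$ can be absorbed into $\delta(n)$ (or made arbitrarily small independently), so that it does not introduce an uncontrolled additive term. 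Once these uniformity issues are handled, the argument is the standard comparison-with-competitor estimate for Tikhonov regularization.
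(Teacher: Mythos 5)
Your proof is correct and follows essentially the same route as the paper's: compare the Tikhonov functional at the minimizer with its value at the discretized Runge approximation of $u_0$, deduce the a priori bound $\|u^*\|_{H^2(\partial\Omega)}\le C M$ and the residual bound from the choice $\alpha\sim\varepsilon^2+\delta(n)^2+h^2$, and conclude by the triangle inequality with the data noise. The only difference is that the paper reads $f^*=u^*|_\Gamma$ as the trace of the discrete minimizer itself, so $\|f^*-f^\varepsilon\|_{H^1(\Gamma)}$ is extracted directly from the functional and your extra bridging step between $f^*$ and the FDM trace via Lemma \ref{estimate-num-Green} (with its attendant uniformity concern) is not needed at this stage; that discrete-versus-exact comparison is deferred to Lemma \ref{boundness-difference} and the proof of Theorem \ref{thm-error-point}.
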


\begin{proof}
First, by Runge's approximation (e.g., \cite{Salo2018}, \cite{Lax1956}), 
there exists a harmonic function $\widetilde{u}$ in $\widetilde{\Omega}$
such that 
\[
\|u_0-\www{u}\|_{H^2(\Omega)}\leq \varepsilon.
\]
Then, 
\begin{equation}\label{bound-u-tilde}
\|\www{u}\|_{H^2(\Omega)}\leq CM,
\end{equation}
and by the trace theorem (\cite{Adams2003}),
\begin{equation}\label{err-f-tilde}
 \|\www{f}_0-f_0\|_{H^1(\Gamma)}+\|\www{g}_0- g_0\|_{L^2(\Gamma)}
\leq \varepsilon,
\end{equation}
where $\www{f}_0=\www{u}|_{\Gamma}$ and $\www{g}_0=\partial_\nu \www{u}|
_{\Gamma}$.

The definition of the minimizer yields
\begin{align}
& \| f^*-f^\varepsilon \|^2_{H^1(\Gamma)}+\|g^*-g^\varepsilon\|^2_{L^2(\Gamma)}+\alpha\|u^*\|^2_{H^2(\partial\Omega)}\notag\\
\leq &  \|\www{f}^h_{0,n}-f^\varepsilon \|^2_{H^1(\Gamma)}+\|\www{g}^h_{0,n}-g^\varepsilon\|^2_{L^2(\Gamma)}+\alpha\|\www{u}_{0,n}^h\|^2_{H^2(\partial\Omega)},\label{minimizer-inequality}
\end{align}
where $\www{f}^h_{0,n}=\www{u}^h_{0,n}|_{\Gamma}$ and so is $\www{g}^h_{0,n}
=\partial_\nu\www{u}^h_{0,n}|_{\Gamma}$.
Therefore,
\begin{equation*}
\alpha \|u^*\|^2_{H^2(\partial\Omega)}\leq
 \|\www{f}_{0,n}^h-f^\varepsilon \|^2_{H^1(\Gamma)}+\|\www{g}_{0,n}^h-g^\varepsilon\|^2_{L^2(\Gamma)}+\alpha \|\www{u}_{0,n}^h\|^2_{H^2(\partial\Omega)}.
\end{equation*}
We can estimate the first term on the right hand side as
\[
\|\www{f}_{0,n}^h-f^\varepsilon \|_{H^1(\Gamma)} \leq \|\www{f}_{0,n}^h
-\www{f}_0 \|_{H^1(\Gamma)}+\|\www{f}_0-f_0 \|_{H^1(\Gamma)}+\|f_0- f^\varepsilon \|_{H^1(\Gamma)}.
\]
The second term $\|\www{g}_{0,n}^h-g^\varepsilon\|^2_{L^2(\Gamma)}$ is dealt with similarly.
Based on Lemma \ref{estimate-num-Green},
we obtain
\[
\|\www{f}_{0,n}^h-\www{f}_0 \|_{H^1(\Gamma)}+\|\www{g}_{0,n}^h-\www{g}_0 \|_{L^2(\Gamma)} \leq  C_1\delta(n)+C_2h,
\]
where $C_1,C_2$ depend on $\Gamma,\www{\Omega},M$. %
By \eqref{err-f-tilde} and the assumption that $\|f^\varepsilon-f_0\|_{H^1(\Gamma)}+\|g^\varepsilon-g_0\|_{L^2 (\Gamma)}\leq \varepsilon $, we
 see 
\[
 \|\www{f}_{0,n}^h-f^\varepsilon \|_{H^1(\Gamma)}+\|\www{g}_{0,n}^h-g^\varepsilon\|_{L^2(\Gamma)}\leq C_3\delta(n)+C_4 h+C_5 \varepsilon,
\]
where the constants $C_3, C_4, C_5$ depend on $M,\Gamma,\www{\Omega}$. 
Meanwhile,
\[
\|\www{u}^h_{0,n}\|_{H^2(\partial\Omega)}\leq \|\www{u}^h_{0,h}-\www{u}_0\|_{H^2(\partial\Omega)}+\|  \www{u}_0-u_0 \|_{H^2(\partial\Omega)}+\|u_0\|_{H^2(\partial\Omega)}\leq C_6M.
\]
%
Consequently,
\[
\|u^*\|^2_{H^2(\partial\Omega)}
\leq C_7(\Gamma,\www{\Omega},M)\frac{\delta^2(n)+ h^2+\varepsilon^2}{\alpha}
+C_6(\Gamma,\www{\Omega})M^2.
\]
%
With the choice of $\alpha$, one reaches 
\begin{equation}\label{bound-tildeU}
\|u^*\|_{H^2(\partial\Omega)}\leq C'(\Gamma, \www{\Omega},M).
\end{equation}

For the residual part, in view of \eqref{minimizer-inequality} and the above estimate we have
\begin{align*}
& \| f^*-f^\varepsilon \|^2_{H^1(\Gamma)}+\| g^*-g^\varepsilon\|^2_{L^2(\Gamma)}\\
\leq & \|\www{f}_{0,n}^h-f^\varepsilon \|^2_{H^1(\Gamma)}+\|\www{g}_{0,n}^h-g^\varepsilon\|^2_{L^2(\Gamma)}+\alpha\|\www{u}_{0,n}^h\|^2_{H^2(\Omega)}\\
\leq &  C_7\left(\delta^2(n)+ h^2+\varepsilon^2\right)+\alpha C_6M^2.
\end{align*}
Therefore, with the choice of $\alpha$, we have
\begin{align*}
& \| f^*-f^\varepsilon \|_{H^1(\Gamma)}+\| g^*-g^\varepsilon\|_{L^2(\Gamma)}\\
\leq & C''(\Gamma,\www{\Omega},M)(\varepsilon+\delta(n)+h).
\end{align*}

Finally, by combining the boundness of $u^*$ and the estimate on the space
of test functions, we have
\begin{align*}
\|f^*-f_0\|_{H^1(\Gamma)}+ \|g^*-g_0\|_{L^2(\Gamma)} & \leq \|f^*-f^\varepsilon\|_{H^1(\Gamma)}+\|f^\varepsilon-f_0\|_{H^1(\Gamma)}\\
& +\|g^*-g^\varepsilon\|_{L^2(\Gamma)}+\|g^\varepsilon-g_0\|_{L^2(\Gamma)}\\
& \leq C(\Gamma,\www{\Omega},M)(\varepsilon+\delta(n)+h),
\end{align*}
which is the conclusion of the lemma.
\end{proof}


\begin{lemma}\label{boundness-difference}
Denote $\mbf{b}^*=(b^*_1,\cdots, b^*_n)$ as the vector corresponds to $u^*=\sum_{i=1}^nb^*_iw^h_i(x)$ where $u^*$ is the minimizer in Lemma \ref{residual-estimate}, and $u^*_n=\sum_{i=1}^nb^*_iw_i(x)$, where $w_i(x)$ are
the base functions defined in Section 3 and $w_i^h(x)$ the
numerical approximations. Under the assumption of 
Lemma \ref{residual-estimate}, we have 
\[
\| u^*_n-u_0 \|_{C^1(\overline{\Omega})}\leq C(\Gamma,\www{\Omega},M),
\]
provided that $n$ is sufficiently large.
\end{lemma}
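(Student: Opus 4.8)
The plan is to bound $\|u^*_n - u_0\|_{C^1(\overline{\Omega})}$ by inserting the two intermediate functions $u^*$ (the numerical minimizer, built from the discretized base solutions $w_i^h$) and a suitable exact harmonic reference, and then to control each gap using interior elliptic estimates on $\overline{\Omega}\subset\widetilde{\Omega}$. Concretely, I would write
\[
\|u^*_n - u_0\|_{C^1(\overline{\Omega})}
\leq \|u^*_n - u^*\|_{C^1(\overline{\Omega})}
 + \|u^* - u_0\|_{C^1(\overline{\Omega})}.
\]
For the first term, note that $u^*_n - u^* = \sum_i b_i^*\bigl(w_i(x) - w_i^h(x)\bigr)$; since each $w_i - w_i^h$ is harmonic in $\widetilde\Omega$ away from discretization error and the FDM estimate of Lemma \ref{estimate-num-Green} gives $\|w_i - w_i^h\|_{C(\overline{\Omega})}\le C h^2$ (with the extra loss for the first-order gradient approximation), interior regularity upgrades this to a $C^1(\overline{\Omega})$ bound of the same order, multiplied by $\|\mbf b^*\|_{\ell^1}$ (or $\ell^2$). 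Hence I need an a priori bound on the coefficient vector $\mbf b^*$.

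The key step, therefore, is to show $\|\mbf b^*\|$ is bounded independently of $n$ and $h$ (for $n$ large, $h$ small, and $\alpha\sim\varepsilon^2+\delta(n)^2+h^2$). This should follow from the regularization term: the minimizer inequality \eqref{minimizer-inequality}, already exploited in Lemma \ref{residual-estimate}, gives the uniform bound \eqref{bound-tildeU}, namely $\|u^*\|_{H^2(\partial\widetilde\Omega)}\le C'(\Gamma,\widetilde\Omega,M)$ (equivalently a bound on $\|\widetilde b^*\|$ in the relevant Sobolev norm on $\partial\widetilde\Omega$, since $\mathcal H$ is the solution operator and the base functions $w_i^h$ are near-orthogonal discretizations of a stable boundary basis). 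From a uniform norm bound on the boundary data $\widetilde b_n^* = \sum b_i^*\varphi_i$ one extracts a uniform bound on $\mbf b^*$ via the (discrete) Riesz basis / frame property of $\{\varphi_i\}$ in $L^2(\partial\widetilde\Omega)$, which is exactly the density-and-stability assumption placed on the test space $V_n$ in Section 3. Then $u^*_n$, being the exact harmonic extension of $\widetilde b_n^*$ into $\widetilde\Omega$, satisfies $\|u^*_n\|_{C^1(\overline\Omega)}\le C\|\widetilde b_n^*\|_{L^2(\partial\widetilde\Omega)}\le C(\Gamma,\widetilde\Omega,M)$ by interior elliptic estimates, and similarly $\|u^*_n - u^*\|_{C^1(\overline\Omega)}\to 0$.

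For the second term $\|u^* - u_0\|_{C^1(\overline\Omega)}$, I would combine Lemma \ref{residual-estimate} (the Cauchy-data residual of $u^*$ is $O(\varepsilon+\delta(n)+h)$) with the uniform bound \eqref{bound-tildeU} on $\|u^*\|_{H^2(\partial\Omega)}$, so that $u^*-u_0$ itself satisfies the hypotheses of the conditional stability machinery; but here only the crude conclusion that it is uniformly bounded in $C^1(\overline\Omega)$ is needed — again via the Runge-approximated boundary representation and interior regularity on $\overline\Omega\subset\widetilde\Omega$, not the sharp $\varepsilon^{\tau(x)}$ rate, which is reserved for Theorem \ref{thm-error-point}. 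Summing the two uniformly bounded contributions gives $\|u^*_n - u_0\|_{C^1(\overline\Omega)}\le C(\Gamma,\widetilde\Omega,M)$.

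The main obstacle I anticipate is the passage from a bound on the boundary \emph{function} $\widetilde b_n^*$ (in $L^2$ or $H^2$ on $\partial\widetilde\Omega$) to a bound on the \emph{coefficient vector} $\mbf b^*$, uniform in $n$: this requires the stability (lower frame bound) of the basis $\{\varphi_i\}$, which is implicit in the hypothesis that $\bigcup_j V_j$ is dense in $L^2(\partial\widetilde\Omega)$ together with the chosen normalization of the $\varphi_i$ (here characteristic functions of boundary arcs $\gamma_i$, for which the needed estimate is elementary once one tracks the arc lengths $|\gamma_i|$). A secondary technical point is that the FDM base solutions $w_i^h$ are only approximately harmonic, so the interior-regularity argument must be applied to the genuinely harmonic $w_i$ and the $O(h^2)$ (resp. $O(h)$ for gradients) correction handled separately, exactly as in Lemma \ref{estimate-num-Green}.
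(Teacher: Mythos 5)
Your proposal is correct and follows essentially the same route as the paper: both rest on the uniform bound \eqref{bound-tildeU} on $\|u^*\|_{H^2(\partial\Omega)}$ extracted from the regularization term, on the FDM convergence $u^*_n\to u^*$, and on Sobolev embedding/interior elliptic regularity to pass from boundary norms to $C^1(\overline{\Omega})$. The paper's proof is in fact terser --- it writes $\| u^*_n-u_0 \|_{C^1(\overline{\Omega})}\leq \|u^*_n\|_{H^2(\partial\Omega)}+ \|u_0\|_{H^2(\partial\Omega)}$ and dismisses the gap $u^*_n-u^*$ with the phrase \emph{the convergence of the FDM} --- so your discussion of bounding the coefficient vector $\mbf{b}^*$ via the frame property of $\{\varphi_i\}$ supplies detail the paper omits rather than diverging from its argument.
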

\begin{proof}
First,
\[
\|u_n^*\|_{H^2(\partial\Omega)}\leq \|u_n^*-u^*\|_{H^2(\partial\Omega)}+\|u^*\|_{H^2(\partial\Omega)}.
\]
By means of the boundness \eqref{bound-tildeU} and the convergence of the FDM, 
we see 
\[
\|u_n^*\|_{H^2(\partial\Omega)}\leq CM.
\]
The assumption of $u_0$ and the Sobolev embedding (\cite{Adams2003}) yield
\[
\| u^*_n-u_0 \|_{C^1(\overline{\Omega})}\leq   \| u^*_n-u_0 \|_{H^2(\partial\Omega)}\leq     \|u^*_n\|_{H^2(\partial\Omega)}+ \|u_0\|_{H^2(\partial\Omega)}\leq C(\Gamma,\www{\Omega},M).
\]
\end{proof}

After having the estimate on $\Gamma$ and the boundness on $\Omega$, we can further have the reconstruction error on $\Omega$.


\begin{proof}[Proof of Theorem 4.1]
We note that $u^*_n-\widetilde{u}$ is harmonic and is bounded due to Lemma \ref{boundness-difference}. By Lemma \ref{residual-estimate}
we can obtain
\begin{equation*}
	\|f^*_n-f_0\|_{H^1(\Gamma)}+\|g^*_n-g_0\|_{H^1(\Gamma)}
\leq  C(\Gamma,\www{\Omega},M)(\varepsilon+h+\delta(n)),
\end{equation*}
where we have used
\[
\|f^*_n-f_0\|_{H^1(\Gamma)}\leq \|f^*_n-f^*\|_{H^1(\Gamma)}+\|f^*-f_0\|_{H^1(\Gamma)}\leq C_1(\Gamma,\www{\Omega},M)h+C_2(\Gamma,\www{\Omega},M)(\varepsilon+h+\delta(n)).
\]
Then we apply the conditional stability result in Theorem \ref{thm-conditional-stability-main} to reach 
\[
|u^*_n(x) -u_0(x)\vert \leq C_3(\Gamma,\www{\Omega},M)(\varepsilon+h+\delta(n))^{\tau(x)}.
\]
Finally, for the approximation error by the FDM for $u^*_n$, we have
\begin{align*}
&|u^*(x)-u_0(x)| \leq |u^*(x)-u^*_n(x)|+|u^*_n(x)-u_0(x)| \\
&\leq C_4h+C_3(\Gamma,\www{\Omega},M)(\varepsilon+h+\delta(n))^{\tau(x)}\leq C(\Gamma,\www{\Omega},M)\varepsilon^{\tau(x)}
\end{align*}
for $x\in\Omega$, provided that $\delta(n)$ and $h$ are sufficiently small.
\end{proof}

\section{Numerical examples}
\paragraph{Numerical example}
We have applied the above numerical methods to various cases. We will demonstrate the performances including the reconstruction evaluations.

We consider the domain $\Omega=(0,1)\times (0,1)$ and the measurement boundary $\Gamma=\{(x_1,0);\, 0<x_1<1\}$. The exact solution is selected as 
$u(x_1,x_2)=e^{4x_1} \cos 4(x_2+0.2)$.
The result with noise level 1\% is displayed in Figure \ref{eg1}, where we adopt the method in Section 3. We choose $\varphi_i(x_1,x_2),i=1,\cdots,n$ as linear interpolation bases along the boundary of the FEM grids. We discussed the case $n=264$ and $h=1/64$ in the FDM calculation. 
The measurement points coincide with the FDM grids.

	\begin{figure}[H]
	\centering
	\subfigure[Exact solution]
		{
		\includegraphics[width=1.7in]{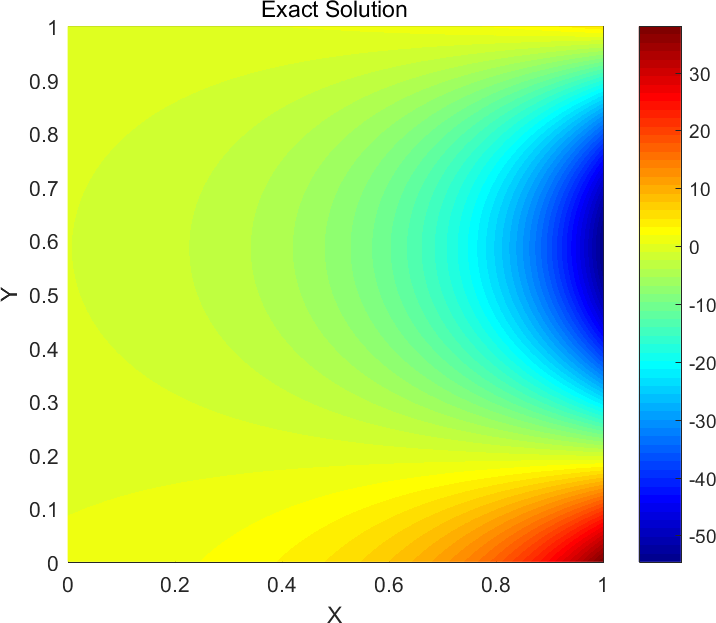}
		}
	\subfigure[Reconstruction]
		{
		\includegraphics[width=1.7in]{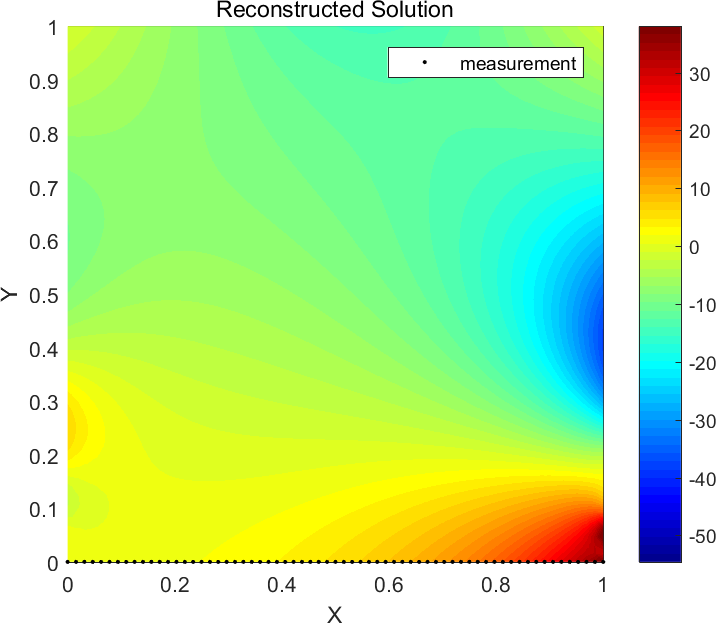}
		}
	\subfigure[Absolute error]
		{
		\includegraphics[width=1.7in]{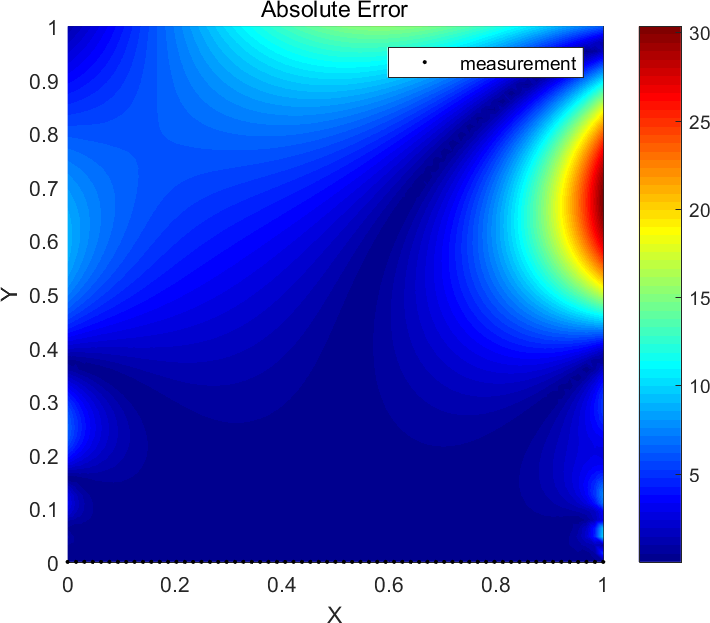}
		}			
	\caption{Reconstruction with one measurement boundary.}
	\label{eg1}
	\end{figure}

For the error in the reconstruction domain, it is expected that the error in 
$\Omega$ will be amplified significantly when getting far from the measurement boundary, due to the H{\"o}lder-type stability index indicated in Section 2. Since the estimate is sharp, even though the error is small somewhere far from the bottom side, the result is not reliable.

	\begin{figure}[H]
	\centering
	\subfigure[Exact solution]
		{
		\includegraphics[width=1.7in]{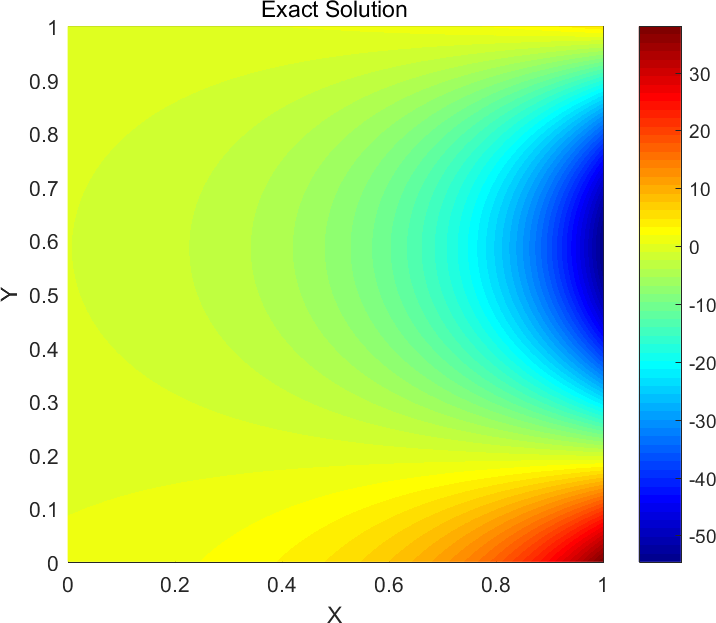}
		}
	\subfigure[Reconstruction]
		{
		\includegraphics[width=1.7in]{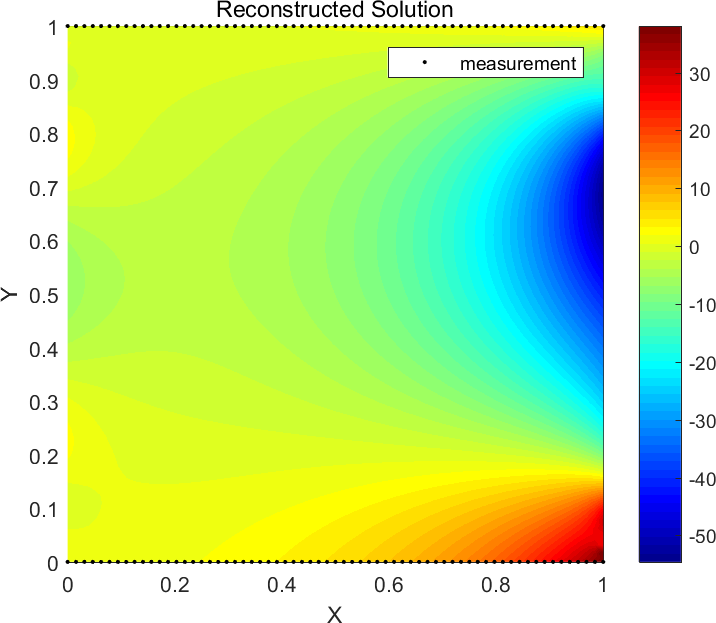}
		}
	\subfigure[Absolute error]
		{
		\includegraphics[width=1.7in]{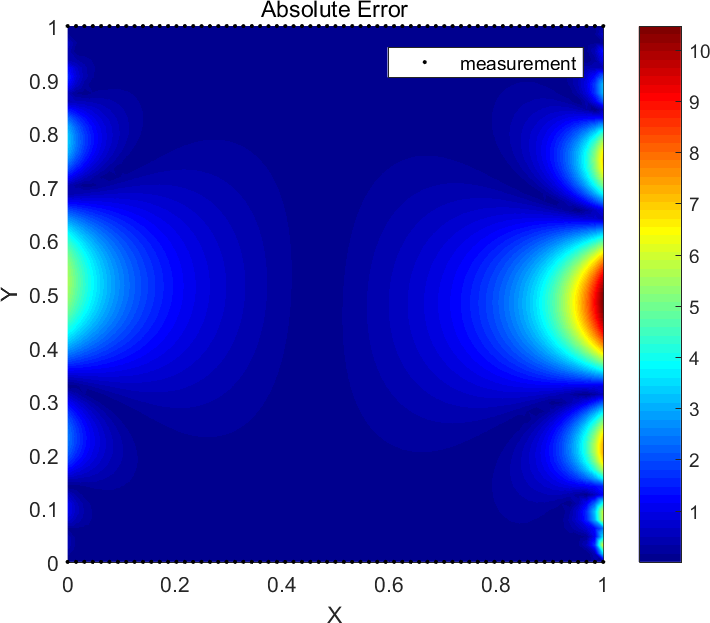}
		}				
	\caption{Reconstruction with two measurement boundaries. }
	\label{eg2}
	\end{figure}

 Figure \ref{eg2} gives the case with an additional measurement boundary on the upper side. The result near the upper boundary is greatly improved comparing with the single measurement case. On the lateral sides, there is no measurement and the result there is not reliable, although the error level is not large in some places. This will be further illustrated in the following.

\paragraph{Indicate function and reliable reconstruction domain}
The error estimate implies that the reconstruction error will no longer be improved by increasing the discrete accuracy once the observation error becomes dominant. Meanwhile, even if the observation error is small, the error far from the measurement area may be enlarged significantly, that is, the reconstruction there is not reliable. These phenomena are caused by the ill-posedness of the problem. Now that the reconstruction accuracy on the whole reconstruction domain are not ensured, one hopes to know where the reconstruction error has acceptable convergence rate with observation noises and discrete errors in real applications. The reliable domain can be determined by the pointwise error estimate in Theorem \ref{thm-error-point}. Since the error growth rate depends on $\tau(x_1,x_2)$,
it further depends on the shape of the reconstruction domain. Figure \ref{tau} 
indicates profiles of the harmonic measure corresponding to the present case with a rectangle computational area. The area bounded by the black curve and the measurement boundary corresponds to $\tau(x_1,x_2)>0.5$, which may be regarded as confidence area in practice. This is consistent with the error distributions in the examples (see Figures \ref{eg1} and \ref{eg2}). The area with convergence rate higher than 0.5 is enlarged significantly by adding measurement boundaries.

	\begin{figure}[H]
	\centering
	\subfigure[One observation side.]
		{
		\includegraphics[width=2.2in]{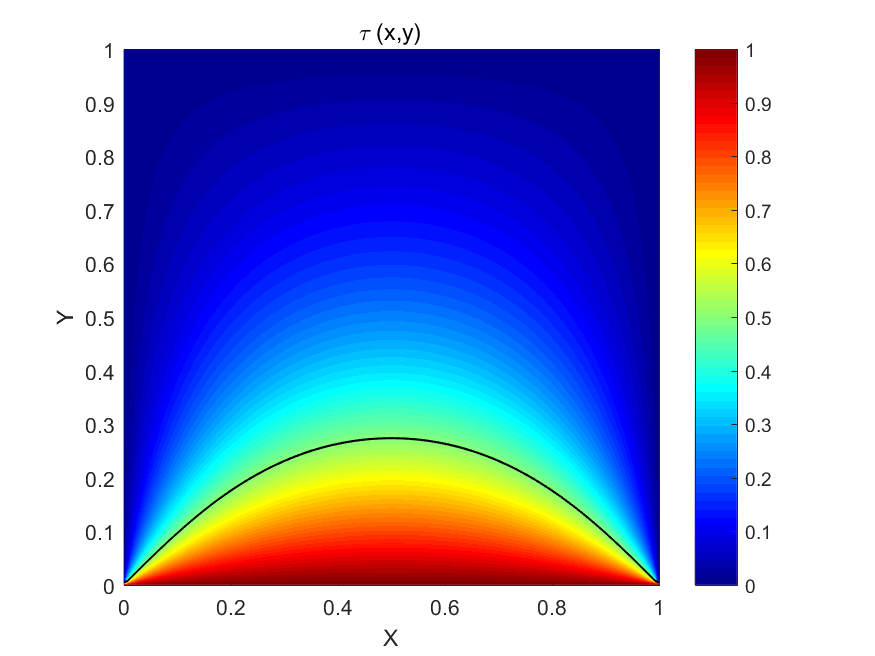}
		}
	\subfigure[Two observation sides.]
		{
		\includegraphics[width=2.2in]{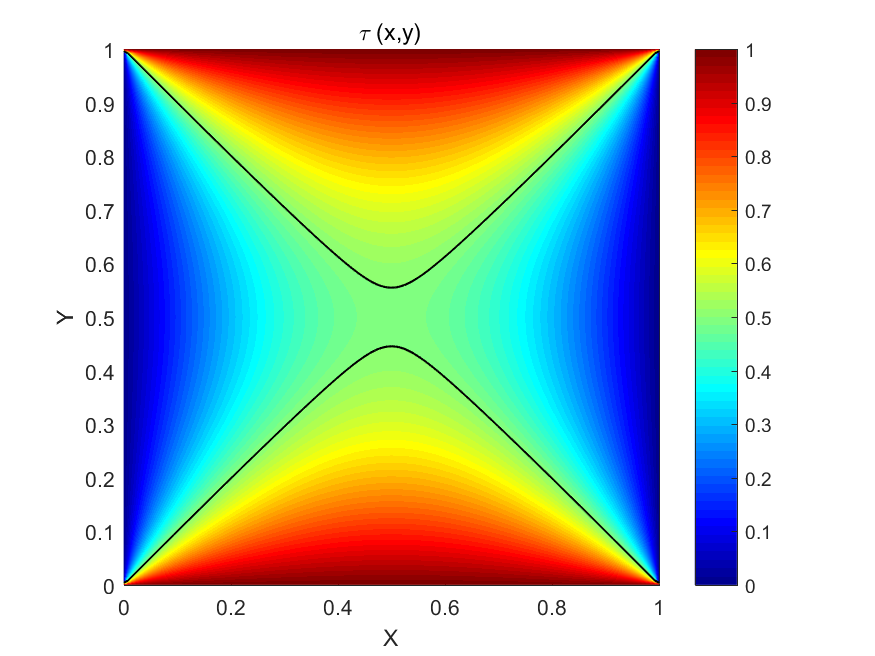}
		}
	\subfigure[Two observation sides.]
		{
		\includegraphics[width=2.2in]{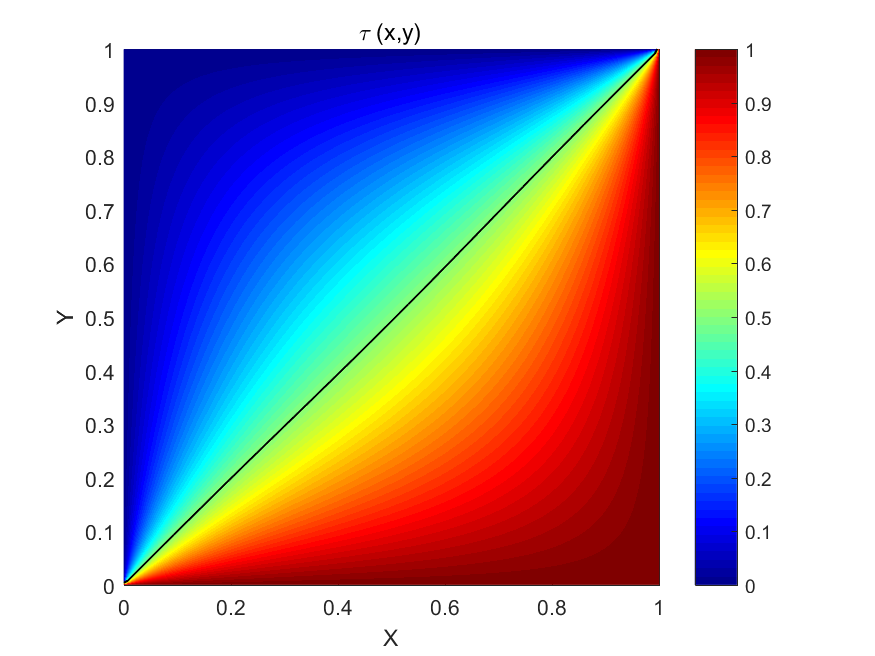}
		}			
	\subfigure[Three observation sides.]
		{
		\includegraphics[width=2.2in]{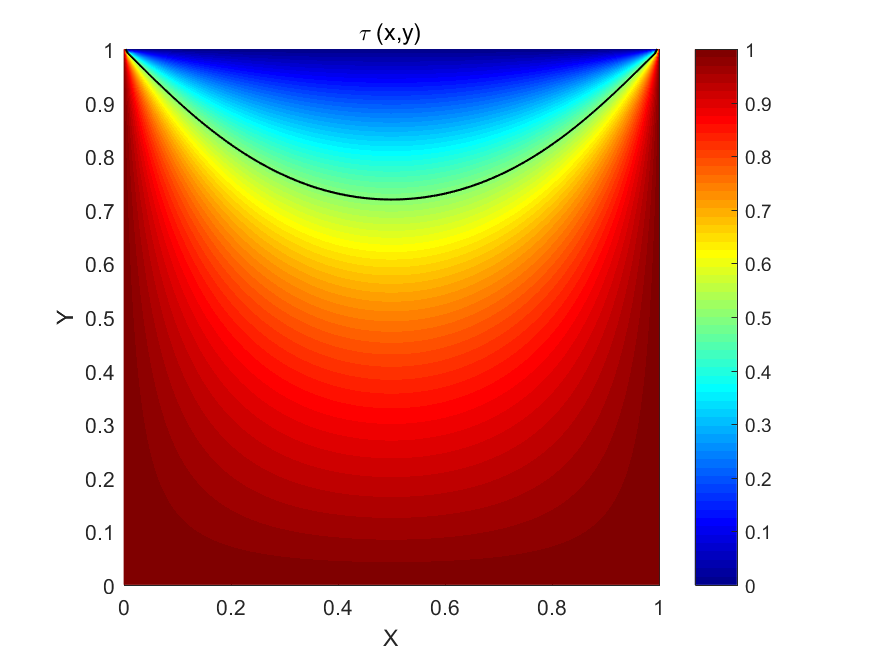}
		}
	\caption{Indicate function $\tau(x)$ with various characteristic boundaries. The black curve is the contour of $\tau(x)=0.5$.}
	\label{tau}
	\end{figure}

\section{Concluding remarks}

The Cauchy problem of the Laplace equation often appears in real applications,
and provides ways to infer global information from local measurement, which is also an ill-posed problem. 
The conditional stability estimates are proved, by which we can 
design stable numerical algorithms and estimate errors. The numerical treatment and the corresponding error estimate are presented. The estimate is featured by an indicate function constructed by the harmonic measure. This facilitates 
the evaluation of the numerical results, based on which how to improve the numerical results are proposed.
Although we treat only the Laplace equation in this work, similar results can be proved for more general elliptic equations. The Cauchy problem is closely related to the unique continuation problem, and for the latter,
conditional stability results and the numerical treatments are presented 
(e.g., \cite{Cheng1998}, \cite{Ke2022}).

\bigskip

{\bf Acknowledgement}\ \ This work was supported by the National Science Foundation of China 
(No. 11971121, No. 12201386) and Grant-in-Aid for Scientific Research (A) 20H00117 
of Japan Society for the Promotion of Science.

\end{document}